\institution{}
\definecolor{darkgreen}{rgb}{0,0.4,0}
\definecolor{BrickRed}{rgb}{0.65,0.08,0}
\newcommand{\walksym}{\omega}
\newcommand{\walk}[1]{\walksym_{#1}}
\newcommand{\stepset}{\mathcal{S}}
\newcommand{\Hc}{\mathcal{H}} 
\newcommand{\C}{\mathbb{C}} 
\newcommand{\N}{\mathbb{N}}
\newcommand{\Z}{{\mathbb Z}}
\newcommand{\R}{{\mathbb R}}
\newcommand{\Rc}{\mathcal{R}}
\def\upos{\{u^{> 0}\}}
\def\uneg{\{u^{< 0}\}}
\newtheorem{theo}{Theorem}[section]
\newtheorem{definition}[theo]{Definition}
\providecommand{\keywords}[1]{\textbf{\textit{Keywords: }} #1}
\begin{document}
\author{Cyril Banderier\\ CNRS \& Univ. Paris Nord, France\\\url{http://lipn.fr/~banderier/}\\
\includegraphics[width=3mm]{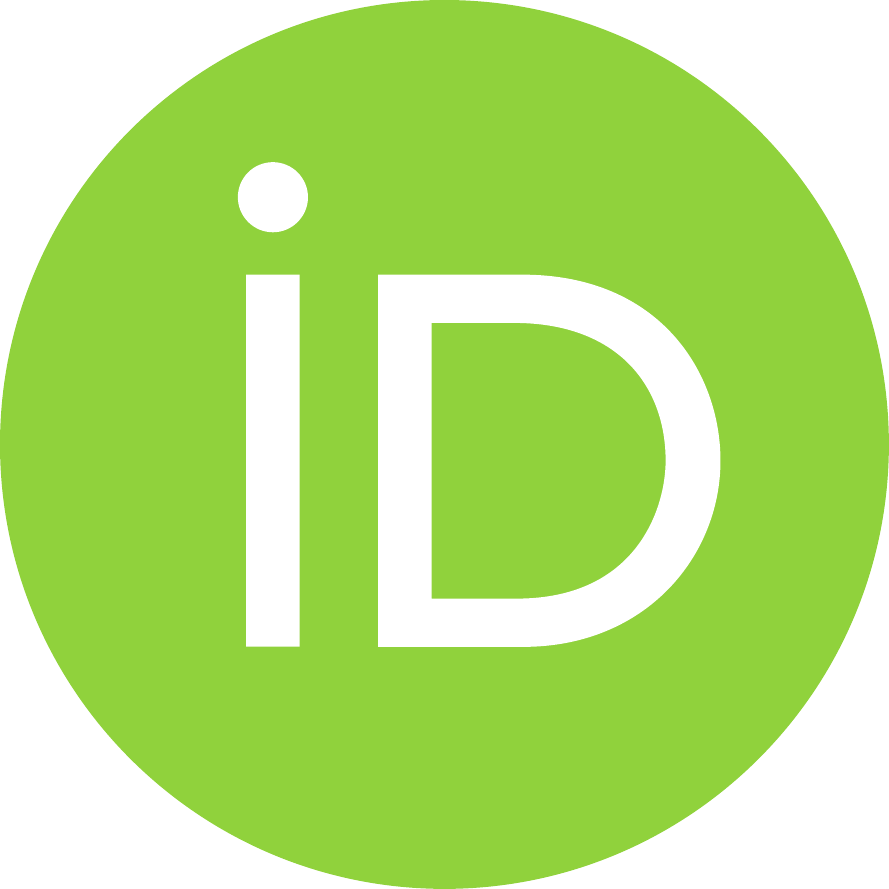} {\small\url{https://orcid.org/0000-0003-0755-3022}}  
\and 
Michael Wallner\\LaBRI, Universit\'e de Bordeaux, France\\\url{http://dmg.tuwien.ac.at/mwallner/} \\
\includegraphics[width=3mm]{orcid} \small{\url{https://orcid.org/0000-0001-8581-449X}}}

\title{Local time for lattice paths and the associated limit laws} 
\date{}
\maketitle

\begin{abstract}
For generalized Dyck paths (i.e., directed lattice paths with any finite set of jumps), we analyse their local time at zero (i.e., the number of times the path is touching or crossing the abscissa).
As we are in a discrete setting, the event we analyse here is ``invisible'' to the tools of Brownian motion theory.
 
It is interesting that the key tool for analysing directed lattice paths, which is the kernel method, is not directly applicable here. Therefore, we introduce a variant of this kernel method to get the trivariate generating function (length, final altitude, local time): this leads to an expression involving symmetric and algebraic functions.

We apply this analysis to different types of constrained lattice paths (meanders, excursions, bridges, \ldots).
Then, we illustrate this approach on ``basketball walks'' which are walks defined by the jumps $-2,-1,0,+1,+2$. 
We use singularity analysis to prove that the limit laws 
for the local time are (depending on the drift and the type of walk)
the geometric distribution, the negative binomial distribution, the Rayleigh distribution, or the half-normal distribution
(a universal distribution up to now rarely encountered in analytic combinatorics).
\end{abstract}
\vskip 32pt

\keywords{Lattice paths, generating function, analytic combinatorics, singularity analysis, kernel method, generalized Dyck paths, algebraic function, local time, 
half-normal distribution, Rayleigh distribution, negative binomial distribution} 

\section{Introduction}
\label{sec:intro}

This article continues our series of investigations on the enumeration, generation, and asymptotics of directed lattice paths~\cite{AsinowskiBacherBanderierGittenberger18,AsinowskiBacherBanderierGittenberger18a,BaFl02,BaWa14,BaWa16,BanderierGittenberger06,KKKK16,Wallner16b}.
Such lattice paths are a fundamental combinatorial structure ubiquitous in computer science (evolution of a stack, bijections with trees, permutations, \ldots), probability theory (linked with random walks or queuing theory), and
statistical mechanics (as basic building blocks for more general 2D models), to name a few.

\pagebreak

\setlength{\abovecaptionskip}{2pt}
\def\w{70mm}
 \begin{table}[t]
 \small
 \begin{center}\renewcommand{\tabcolsep}{3pt}
 \begin{tabular}{|c|c|c|}
 \hline
 & ending anywhere & ending at 0\\
 \hline
 \begin{tabular}{c} unconstrained \\ (on~$\Z$) \end{tabular}
 & \begin{tabular}{c}  \\[-2mm]{\includegraphics[width=\w]{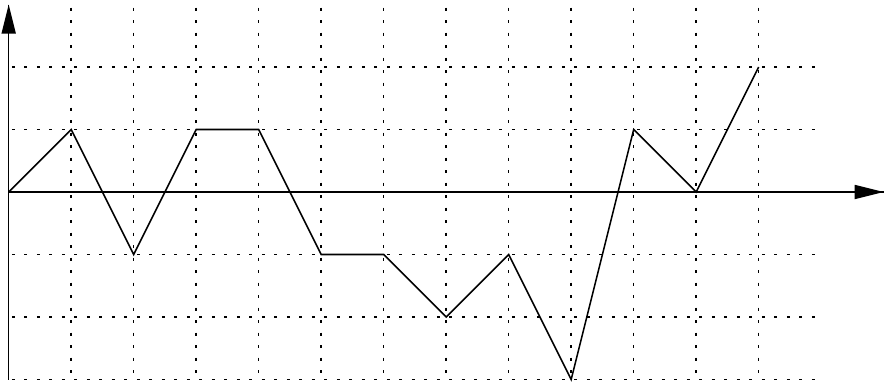}} \\  walk/path ($\cal W$)  \end{tabular}
 & \begin{tabular}{c}  \\[-2mm]{\includegraphics[width=\w]{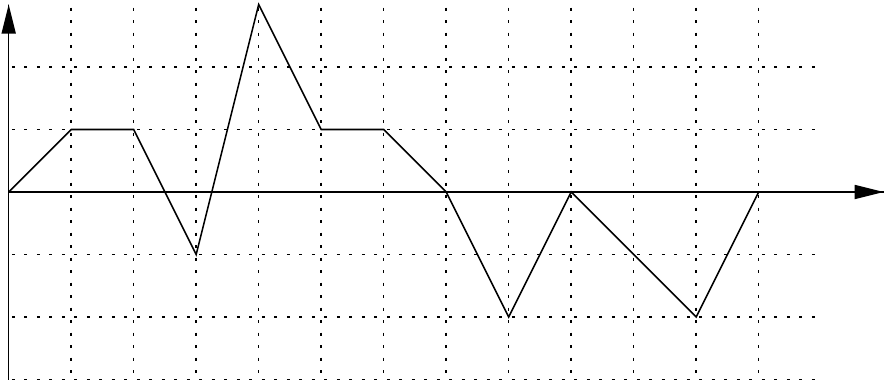}} \\ bridge ($\cal B$) \end{tabular} \\
 \hline
 \begin{tabular}{c}constrained\\ (on $\N$) \end{tabular}
 & \begin{tabular}{c}  \\[-2mm]{\includegraphics[width=\w]{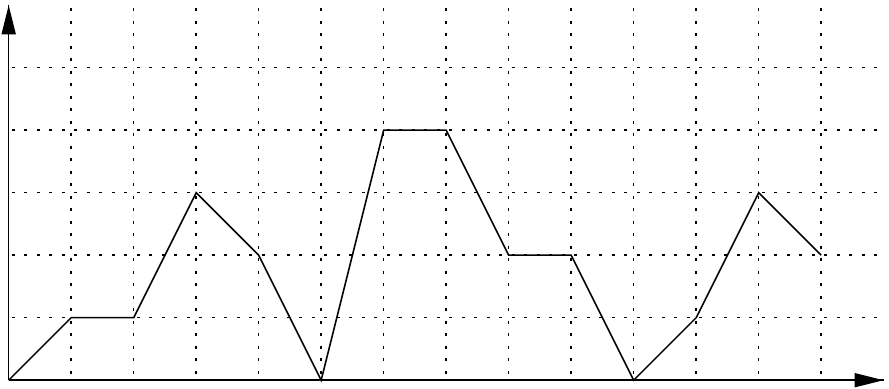}} \\  meander ($\cal M$)\\  \end{tabular}
 & \begin{tabular}{c}  \\[-2mm]{\includegraphics[width=\w]{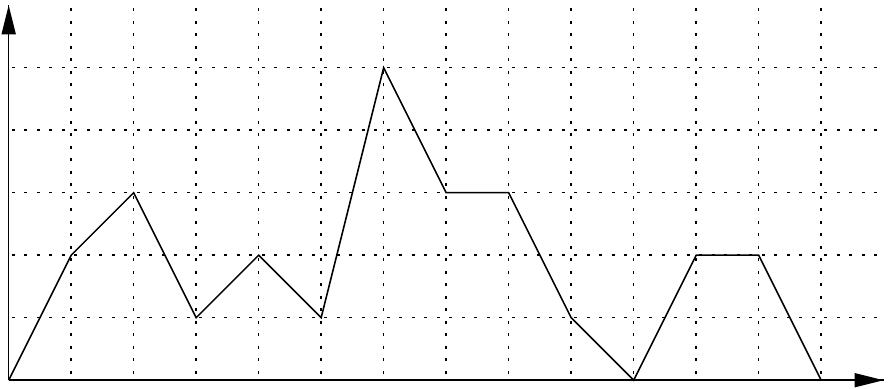}} \\  excursion ($\cal E$)\\  \end{tabular}\\
 \hline
 \end{tabular}
 \end{center}
 \caption{Four types of paths: walks, bridges, meanders, and excursions.}\label{fig-4types} 
 \end{table}

Let us give a definition of the lattice paths we consider:
 \begin{definition}[Jumps and lattice paths] \label{def:LP}
 A \emph{step set} $\stepset \subset \Z^2$ is a finite set of vectors $\{ (x_1,y_1), \ldots, (x_m,y_m)\}$. 
An $n$-step \emph{lattice path} or \emph{walk} is a sequence of vectors $(v_1,\ldots,v_n)$, such that $v_j$ is in~$\stepset$. 
Geometrically, it may be interpreted as a sequence of points $\walksym =(\walk{0},\walk{1},\ldots,\walk{n})$ where $\walk{i} \in \Z^2,~\walk{0} = (0,0)$ (or another starting point)
and $\walk{i}-\walk{i-1} = v_i$ for $i=1,\ldots,n$.
The elements of $\stepset$ are called \emph{steps} or \emph{jumps}. 
The \emph{length} $|\walksym|$ of a lattice path is its number~$n$ of jumps. 
 \end{definition}
We restrict our attention to \emph{directed paths} which are defined by the fact that, for each jump $(x,y) \in \stepset$, one must have $x \geq 0$. 
Lattice paths can have different additional constraints shown in Table~\ref{fig-4types}.

Note that it is possible to encode lattice paths in words. Then, the constrained lattice paths 
we consider can be enumerated by context-free grammars~\cite{LabelleYeh90,MerliniRogersSprugnoliVerri99,Duchon00}.
One drawback of the grammar approach is that it is not easy to get universal asymptotic results from it,  
even if it is possible to establish generic results on the associated critical exponents~\cite{BaDr13}. 
Also, the grammar approach quickly leads to heavy case-by-case computations (resultants of equations of huge degree)
as soon as the set of jumps $\stepset$ contains a large jump.

In this article, we show how to proceed for the enumeration and the asymptotics in these harder cases: 
our techniques are relying on the ``kernel method'' 
which (contrary to the context-free grammar approach) offers access to the {\em generic} structure of the final generating functions 
and the {\em universality} of their asymptotics via singularity analysis~\cite{BaFl02,flaj09}. 
 
The following convenient notation, a variant of the Omega operator of MacMahon, will be another of our ingredients:
\begin{equation*}\upos := \sum_{n>0} u^n [u^n] \text{\qquad and \qquad} \uneg:= \sum_{n< 0} u^n [u^n]
\end{equation*}
where $[u^n] g(u)$ stands for the coefficient of $u^n$ in a (Laurent) power series $g(u)$ from $\C((u))$ or $\C[u,1/u][[z]]$.
MacMahon introduced this operator on rational functions, in order to get binomial identities or integer partition formulae~\cite[Section VIII]{MacMahon15}.
In the late 1990s, this operator experienced a strong revival, mostly by work/packages of Andrews, Paule, Riese, and Han~\cite{Andrews01, Han03}.
Another nice application of the Omega operator is the proof of D-finiteness of some walks in the quarter plane 
by Bousquet-M\'elou and Mishna~\cite{BousquetMelouMishna10}.
In this article we make use of this operator, not on sums or products of rational functions (like it is the case for integer partitions, 
or for quarter plane walks),
but on the level of functional equations involving algebraic functions.

\pagebreak

\section{Generating function for the local time at $0$}

The number of times the lattice path is exactly at altitude $0$ is an easy parameter to catch via combinatorial decompositions
(see analysis of the number of returns to zero for excursions, via a decomposition into arches, by Banderier and Flajolet~\cite{BaFl02}).
In order to get the {\em local time at $0$} (as defined in Figure~\ref{axisCrossings}), it remains to capture a more subtle parameter: 
the number of steps which are crossing the $x$-axis (without actually starting or ending at altitude~$0$).
For any family of lattice paths, let $P(u)=1/u^c+\dots+u^d$ be the Laurent polynomial encoding the jumps allowed at each step.
For example, the ``basketball walks'' which we considered in~\cite{KKKK16} are walks with jumps in the set $\{-2, -1, 0,+1,+2\}$, 
and we get $P(u) := 1/u^{2} + 1/u + 1+ u+u^2$. 
More generally, each jump $i$ may get a weight~$p_i$, which gives \[P(u)=\sum_{i=-c}^d p_i u^i.\]

Feller~\cite{Feller68,Feller71}, Cz\'aki and Vincze~\cite{CzakiVincze61} and Jain~\cite{Jain66}, 
considered $P(u)=u/2+(1/2)u$, Wallner~\cite{Wallner16b} considered $P(u)= p_{-1}/u+p_0+ p_1 u$,
and we show here which new method is needed to tackle more general $P(u)$.

\begin{figure}[bh]
	\centering
\includegraphics[width=172mm]{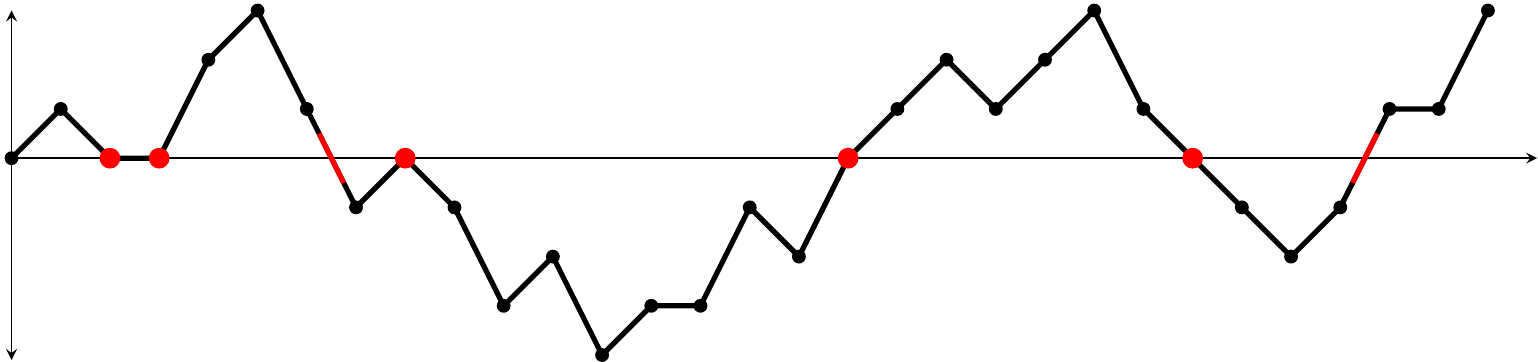}
\caption{\label{axisCrossings} 
We consider the number of times a lattice path is touching or crossing the $x$-axis.
The \emph{local time} at altitude~$h$ of a path $((x_0,y_0),\dots,(x_n,y_n))$ is $\# \{ 0< t< n$ such that ($y_t > h$ and $y_{t+1} < h$) or ($y_t < h$ and $y_{t+1} > h$) or ($y_t = h) \}$.
The same set without the last condition for $h=0$ defines $x$-axis crossings.  
In the figure above, the local time at zero is $7$, the number of returns to zero is $5$ (large red vertices), and the number of $x$-axis crossings is $2$ (red line segments).
In this article, we show how to capture the local time at~$0$;
 it is then straightforward to do it at any other altitude. }
\end{figure}

Figure~\ref{axisCrossings} illustrates that
\begin{center} local time at zero \quad = \quad number of returns to zero \quad + \quad number of $x$-axis crossings. \end{center}
As the generating function of the returns to zero, and the corresponding limit laws
are known (see~\cite{BaFl02,Wallner16b}), we can now focus on the number of $x$-axis crossings.

\begin{theo}[Generating function for number of $x$-axis crossings]\label{theo:closedform}
Let $w_{n,k,j}$ be the number of walks of length $n$ which end at altitude $k$, 
and have $j$ crossings of the $x$-axis.
Then the generating function 
\begin{equation*}W(z,u,q) = \sum_{n,j \geq 0, k\in \Z} w_{n,k,j} z^n u^k q^j
= \sum_{k \in \Z} W_k(z,q) u^k
\end{equation*}
is algebraic and expressible in terms of the roots $u_i(z)$ of $1-zP(u)=0$.
\end{theo}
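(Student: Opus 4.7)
The plan is to derive a functional equation for $W(z,u,q)$ by a one-step extension argument, then to solve it via a variant of the kernel method adapted to the Laurent (rather than merely power) series nature of $W$ in the variable $u$.

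Extending a walk by one jump contributes $zP(u)$, with an additional factor $q$ exactly when the jump is an $x$-axis crossing: previous altitude $>0$ and new altitude $<0$, or vice versa. Writing $W = W^- + W_0 + W^+$ with $W^+ := \upos W$ and $W^- := \uneg W$, these two crossing families are selected precisely by $\uneg\bigl(P(u)W^+\bigr)$ and $\upos\bigl(P(u)W^-\bigr)$. This gives the functional equation
\[
W(z,u,q)\bigl(1 - zP(u)\bigr) = 1 + (q-1)\,z\,\uneg\bigl(PW^+\bigr) + (q-1)\,z\,\upos\bigl(PW^-\bigr).
\]
The key observation is that the right-hand side is a Laurent polynomial in $u$ supported on the powers $u^{-(c-1)},\ldots,u^{d-1}$, whose coefficients depend linearly on only the $c+d-2$ boundary series $W_1,\ldots,W_{c-1}$ and $W_{-1},\ldots,W_{-(d-1)}$.

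The variant of the kernel method then proceeds as follows. Multiplying by $u^c$ turns the kernel $K(u) := u^c(1-zP(u))$ into a polynomial of degree $c+d$, with $c$ small roots $u_i(z) \to 0$ and $d$ large roots $v_j(z) \to \infty$ as $z \to 0$. The combinatorial bound $W_k(z) = O(z^{|k|})$ makes $W^+$, respectively $W^-$, substitutable at small, respectively large, roots as Puiseux series in $z$, so plugging each of the $c+d$ roots into the functional equation kills the left-hand side and produces $c+d$ linear equations in the $c+d-2$ unknown boundary series. The system is a priori overdetermined, but consistent because the combinatorial $W$ exists; Cramer's rule then expresses each unknown as a rational function of $z$ and of the roots of $K$, hence as an algebraic function over $\C(z)$. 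Substituting the solved boundary series back gives
\[
W(z,u,q) = \frac{1 + (q-1)z\bigl[\uneg(PW^+)+\upos(PW^-)\bigr]}{1 - zP(u)},
\]
whose numerator is an explicit polynomial in $u$ with coefficients algebraic in $z$ and rational in $q$, so $W$ itself is algebraic. By Vieta's formulas the elementary symmetric functions of the full root set of $K$ are rational in $z$, so the result can equivalently be rewritten in terms of symmetric functions of the $c$ small roots $u_i(z)$ alone, as claimed.

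The main obstacle is precisely this kernel-method step. In the classical setting for half-plane walks the relevant GF is a power series in $u$ and the substitution of small roots is unambiguous. Here $W$ is genuinely bilateral in $u$, both $W^+$ and $W^-$ appear on the right-hand side, and the resulting linear system simultaneously involves the small and the large roots of $K$ and is overdetermined. The novel input is the coordinated use of both families of roots, together with the estimate $W_k = O(z^{|k|})$ that legitimises each substitution, so as to extract the announced algebraic closed form.
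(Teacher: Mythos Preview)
Your functional equation and the overall plan (a kernel method using both the small and the large roots of $1-zP(u)$) match the paper's approach, but the crucial substitution step is not justified as written, and this is exactly the obstacle the paper singles out. You assert that ``plugging each of the $c+d$ roots into the functional equation kills the left-hand side''. It does not: the left-hand side is $(1-zP(u))\,W(z,u,q)$ with $W$ genuinely bilateral in $u$, and you cannot split the evaluation as $(1-zP(u_i))\cdot W(z,u_i,q)=0$ because $W(z,u_i,q)$ is undefined. Your estimate $W_k=O(z^{|k|})$ is also off (the correct bound is $W_k=O\!\bigl(z^{\lceil |k|/\max(c,d)\rceil}\bigr)$), and even with the correct bound, substituting a small root $u_i(z)\sim z^{1/c}$ into $W^-$ produces infinitely many contributions of order $z^0$, so the series does not converge as a Puiseux series in $z$. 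Equivalently: the identity $(1-zP(u))W=\text{RHS}$ already says that the product is the Laurent polynomial on the right, so evaluating both sides at $u=u_i$ is a tautology, not an equation.

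The paper's remedy is to apply $\{u^{>0}\}$ (respectively $\{u^{<0}\}$) to the functional equation \emph{before} substituting. After extracting the positive part, the infinite tail on the left becomes $(1-zP(u))\sum_{k\ge c+1}W_k u^k$, a one-sided series in which substitution of the small roots is legitimate and genuinely yields $0$; symmetrically for the negative part and the large roots. This extraction also moves several boundary coefficients from the left to the right: the resulting system involves the $c+d+1$ unknowns $W_{-d},\dots,W_c$ (not only the $c+d-2$ series $W_{\pm 1},\dots$ appearing in your RHS), and one needs the additional equation coming from $[u^0]$ to close it. With that correction your argument becomes the paper's ``bilateral kernel method''; as stated, the substitution step is the missing idea.
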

\begin{proof}[Proof (Sketch)]
A step by step decomposition of the walk gives the following functional equation 
(where we write $W_k$ for $W_k(z,q)$ for readability, and where $q$ encodes the $x$-axis crossings):

\begin{align*}
	W(z,u,q) = 1 +z P(u) W(z,u,q)
&- z \left( \sum_{k=-d+1}^{-1} \{u^{>0}\} (P(u) u^k) W_{k} + \sum_{k=1}^{c-1} \{u^{<0}\} (P(u) u^k) W_k \right)\\
&+zq \left( \sum_{k=-d+1}^{-1} \{u^{>0}\} (P(u) u^k) W_{k} + \sum_{k=1}^{c-1} \{u^{<0}\} (P(u) u^k) W_k \right).
\end{align*}
This equation can be read as ``$W=$ empty walk {\em or} the walk is at some altitude encoded by $u^k$ 
and multiply by $P(u)$ to do all the jumps, then we remove the ones crossing the $x$-axis, and re-add them with a marker $q$.''
This is conveniently rewritten as
\begin{align*}
	\label{eq:funceq}
	(1 - z P(u)) \left( \sum_{k\in \Z} W_k u^k \right) = 1 - z (1-q) \left( \sum_{k=-d+1}^{-1} \{u^{>0}\} (P(u) u^k) W_{k} + \sum_{k=1}^{c-1} \{u^{<0}\} (P(u) u^k) W_{k} \right).
\end{align*}

Here, one may think that we could apply the classical kernel method (see e.g.~\cite{BaFl02}): 
one substitutes $u$ by any root $u(z)$ of $1-zP(u)$,
as this will cancel the left-hand side, and thus doing that for all roots 
will lead to a system of $c+d$ unknowns, with $c+d$ independent relations. 
Hence, bingo, solving this system gives a closed-form formula for $W(z,u,q)$!?
Unfortunately, this is not working: indeed, in such an equation, if one substitutes a variable by another expression,
one has to take care to stay in the ring of formal power series
(in order to avoid non-trivial zero divisors, as exemplified by the phenomenon $(1-u) \sum_{k\in\Z} u^k =0$). 
In our equation, $F$ belongs to $\C[u,1/u][q][[z]]$. As the exponents of $u$ range from $-\infty$ to $+\infty$,
it is not legitimate to substitute $u$ by our Puiseux series $u(z)$, because this would lead to arbitrary negative and positive powers of $z$.
We need to adapt the kernel method and to transform it into what we call a ``bilateral kernel method'':
\begin{enumerate}
\item Extract the positive part $\upos$. This gives the following equation
\begin{align*}	
\upos \left( (1-zP(u))\sum_{k=-d+1}^{\infty} W_k u^k \right)	 = - 
	 	z(1-q)\sum_{k=-d+1}^{-1} \upos (P(u) u^k) W_k
\end{align*}
which can be rewritten as
\begin{align*}	
	 (1-zP(u))\sum_{k=c+1}^{\infty} W_k u^k  = 
	    -z(1-q)\sum_{k=-d+1}^{-1} \upos (P(u) u^k) W_k
	 	  -\upos \left( (1-zP(u))\sum_{k=-d+1}^c W_k u^k \right).
\end{align*}
In this expression, it is legitimate to substitute $u$ by the $c$ roots $u_i(z)$ of $1-zP(u)$ such that $u_i(z)\sim 0$ for $z\sim0$.
Thus we get $c$~new equations:
\begin{align*}	
	 0  = \left( 
	    zq\sum_{k=-d+1}^{-1} \upos (P(u) u^k) W_k
		+z \upos (P(u)) W_0 
	 	-\sum_{k=1}^{c} \upos( (1-zP(u))u^k ) W_k \right)_{|u=u_i(z)}.
\end{align*}
\item Extract the negative part $\uneg$. This leads to an equation
in which it is legitimate to substitute $u$ by the $d$ roots $v_i(z)$ of $1-zP(u)$ such that $|v_i(z)|\sim \infty$ for $z\sim0$.
Thus we get $d$~new equations:
\begin{align*}	
	 0  = \left( 
	    zq\sum_{k=1}^{c-1} \uneg (P(u) u^k) W_k
		+z \uneg (P(u)) W_0 
	 	-\sum_{k=-d}^{-1} \uneg( (1-zP(u))u^k ) W_k \right)_{|u=v_i(z)}.
\end{align*}
\item Extract $[u^0]$. It gives one additional equation:
\begin{align*}
[u^0] \left(	(1 - z P(u)) \left( \sum_{k=-d}^c W_k u^k \right)\right) = 1.
\end{align*}

\end{enumerate}
All these equations involve (a subset of) of the yet unknown $c+d+1$ auxiliary functions $W_{-d},\dots,W_c$.
This system fully allows to reconstruct the initial equation: 
$W$ (the linear combination of the $W_k$'s) is also a solution of the initial functional equation, which was a contraction
in the space of formal power series $\C[q,u,1/u][[z]]$, 
therefore this functional equation had a unique solution in this ring, 
which we thus identified as being $W$, the only candidate for~it.
This explains why the system of equations is of full rank: it has unique power series solutions $W_k$,
all of them expressible as a quotient of polynomials having the roots as variable. 
(We say more on this shape in the full version of this article: it requires an excursion (sic) via the Schur function world!) 
So, in all cases, one gets a closed form for the generating function $W$ (and the $W_k$'s) in terms of the roots of the kernel.
\end{proof}

Now, the local analysis of these roots at their branching points, 
as done in~\cite{BaFl02} allows to get the Puiseux behaviour of the functions $W(z,u,1)$ and $W_0(z,u)$ 
near their dominant singularity (for any fixed value of $u$, this dominant singularity is still the same $\rho$ 
as in~\cite{BaFl02}). It allows to rewrite locally 
these functions (for $z\sim \rho$) in the framework of the schemes developed in~\cite{Wallner16b}, on which we comment 
more in the next section.

Let us now illustrate this approach on basketball walks (walks with jumps $-2,-1,0,+1,+2$, see~\cite{KKKK16}).
This leads to the simple linear system:
\begin{equation*}\begin{cases}
W_0=1+z (W_{-1}+W_{-2}+W_0+ W_1 + W_2 ) \\
qzW_{{-1}}+ \left( zu_{{1}}+z \right) W_{{0}}+ \left( z{u_{{1}}}^{2}+zu_{{1}}+z-1 \right) W_{{1}}+ \left( z{u_{{1}}}^{3}+z{u_{{1}}}^{2}+zu_{{1}}+z-u_{{1}} \right) W_{{2}}=0\\
qzW_{{-1}}+ \left( zu_{{2}}+z \right) W_{{0}}+ \left( z{u_{{2}}}^{2}+zu_{{2}}+z-1 \right) W_{{1}}+ \left( z{u_{{2}}}^{3}+z{u_{{2}}}^{2}+zu_{{2}}+z-u_{{2}} \right) W_{{2}}=0\\
qzW_{{1}}{v_{{1}}}^{3}+ \left( v_{{1}}+1 \right) zW_{{0}}{v_{{1}}}^{2}+ \left( (z-1){v_{{1}}}^{2}+zv_{{1}}+z \right) v_1 W_{-1}+ \left( z{v_{1}}^3+(z-1){v_{{1}}}^{2}+zv_{{1}}+z \right) W_{{-2}}=0\\
qzW_{{1}}{v_{{2}}}^{3}+ \left( v_{{2}}+1 \right) zW_{{0}}{v_{{2}}}^{2}+ \left( (z-1){v_{{2}}}^{2}+zv_{{2}}+z \right) v_2 W_{-1}+ \left( z{v_{2}}^3+(z-1){v_{{2}}}^{2}+zv_{{2}}+z \right) W_{{-2}}=0
\end{cases}\end{equation*}
Solving it gives a closed form expression for $W(z,u,q)$ lengthy to write.
If one sets $u=1$, the Ferrari--Bombelli formula can be used and gives 
the following closed form for the generating functions of the walks:
\[W(z,1,q)=
{\frac { z \left( z+1 \right)  \left( q-1 \right) ^{2} \left( 4\,{q}^{2}{z}^{2}-3\,q{z}^{2}-6\,qz+4\,{z}^{2}+q \right)  \sqrt{z-1}}
{2B\sqrt {5\,z-1}}}
+{\frac {\left( {q}^{2}-1 \right) \sqrt [4]{z-1}\sqrt {z}\sqrt {{\it C- D E }}}{\sqrt {2} B \left( 5\,z-1 \right) ^{3/4}}}
-{\frac {A}{2B}}\]
where  $E:=\sqrt{(5z-1)(z-1)}$ and  \\
\resizebox{\textwidth}{!}{\begin{minipage}{\textwidth}
\begin{align*}A:=& 5 \left( {q}^{2}+1 \right)  \left( 4{q}^{2}-3q+4 \right) {z}^{4}+ \left( 2{q}^{4}-49{q}^{3}+24{q}^{2}-49q+2 \right) {z}^{3}- \left( 8{q}^{4}+11{q}^{3}-20{q}^{2}+11q+8 \right) {z}^{2}\\& + \left( 2{q}^{4}+13{q}^{3}+16{q}^{2}+13q+2 \right) z-2q (q+1)^2,
\\
B:=& \left( 31{q}^4+{q}^{3}+61{q}^2+q+31 \right) z^5- \left( 8{q}^{4}+83{q}^{3}+18{q}^2+83q+8 \right) {z}^{4}-2 \left( 7{q}^{4}+3{q}^{3}-15{q}^{2}+3q+7 \right) {z}^{3}\\&+2 \left( 4{q}^{4}+17{q}^{3}+20{q}^{2}+17q+4 \right) {z}^{2}- \left( {q}^{4}+11{q}^{3}+19{q}^{2}+11q+1 \right) z+{q}^{3}+2{q}^{2}+q,
\\
C:=& (z-1) (5z-1) ( z+1)  \Big(  \left( 44{q}^{4}-26{q}^{3}+89{q}^{2}-26q+44 \right) {z}^{5} + \left( 4{q}^{4}-126{q}^{3}+44{q}^{2}-126q+4 \right) {z}^{4}
\\&
+ \left( -20{q}^{4}-14{q}^{3}+58{q}^{2}-14q-20 \right) {z}^{3}+ \left( 4{q}^{4}+46{q}^{3}+24{q}^{2}+46q+4 \right) {z}^{2}+ \left( -8{q}^{3}-27{q}^{2}-8q \right) z+4{q}^{2} \Big),
\\
D:=& 5\left( 34q^4-6q^3+69q^2-6q+34 \right) z^7+2 \left( 22{q}^{4}-243{q}^{3}+67{q}^{2}-243q+22 \right) z^6 
\\&- \left( 132{q}^{4}+196{q}^{3}-81{q}^{2}+196q+132 \right) {z}^{5}-4 \left( 2{q}^{4}-81{q}^{3}-42{q}^{2}-81q+2 \right) {z}^{4}
\\&+ \left( 26{q}^{4}+58{q}^{3}-125{q}^{2}+58q+26 \right) {z}^{3} -2 \left( 2{q}^{4}+31{q}^{3}+29{q}^{2}+31q+2 \right) {z}^{2}+q \left( 8{q}^{2}+35q+8 \right) z-4{q}^{2}.\end{align*}
\end{minipage} }

\smallskip
Note that the Cardan/Ferrari--Bombelli formulas are perhaps nice for the eyes (arguably!),
but they  are not the right way to handle these generating functions from a computer algebra point of view.
It is indeed better to work directly with symmetric functions of the roots $u_i$'s
(and this advantageously also allows to handle the cases of degree~$>4$): 
this is an efficient algorithmic way to use the Newton relations/Vieta's formulas between the roots of the kernel.
Using the expression with the small roots is also the way to follow the Puiseux behaviour
of these symmetric functions,  as we detail in the forthcoming full version.

What is more, the above linear system shows that it is now routine to get in a few minutes thousands of coefficients of our generating functions (e.g.~via Newton iteration, in any computer algebra software).
Here are the first terms of the generating functions of $x$-axis crossings for walks: 

\begin{gather}
\begin{align*}
 W(z,1,q)=&1+5\,z+ \left( 2\,q +23 \right) {z}^{2}+ \left( 2\,{q}^{2}+14\,q+109 \right) {z}^{3}+\\
& \left( 2\,{q}^{3}+16\,{q}^{2}+88\,q+519 \right) {z}^{4}+ \left( 2\,{q}^{4}+18\,{q}^{3}+112\,{q}^{2}+504\,q+2489 \right) {z}^{5}+\\
& \left( 2\,{q}^{5}+20\,{q}^{4}+138\,{q}^{3}+700\,{q}^{2}+2776\,q+11989 \right) {z}^{6}+\\
& \left( 2\,{q}^{6}+22\,{q}^{5}+166\,{q}^{4}+930\,{q}^{3}+4150\,{q}^{2}+14896\,q+57959 \right) {z}^{7}+\\
& \left( 2\,{q}^{7}+24\,{q}^{6}+196\,{q}^{5}+1196\,{q}^{4}+5878\,{q}^{3}+23720\,{q}^{2}+78614\,q+280995 \right) {z}^{8}+\\
& \left( 2\,{q}^{8}+26\,{q}^{7}+228\,{q}^{6}+1500\,{q}^{5}+8004\,{q}^{4}+35518\,{q}^{3}+132264\,{q}^{2}+410046\,q+1365537 \right) {z}^{9}+O \left( {z}^{10} \right) .
\end{align*}
\end{gather}

What is the asymptotic behaviour of these polynomials in $q$? 
This is what we present in the next section.

\newpage
\section{Limit laws}

\begin{theo}[Limit law for the local time at $0$]\label{limitlaw}
For a walk with step set encoded by $P(u)$,
the limit laws for the local time at $0$ depend on
the drift $P'(1)$ of the walk (see Table~\ref{tab:compprobdis}):

\begin{center}
\begin{tabular}{ll}
	Type of the walk & Limit law\\
	\hline\hline	
	Excursion & Negative binomial distribution\\
	Meander with drift $<0$ & Negative binomial distribution\\
	Meander with drift $\geq 0$ & Geometric distribution\\
	Walk with non-zero drift & Geometric distribution\\
	Walk with zero drift & Half-normal distribution ${\mathcal H}(\lambda)$\\
	Bridge & Rayleigh distribution  ${\mathcal R}(\lambda)$
\end{tabular}
\end{center}

The parameters of these distributions are given in the proof and in Table~\ref{tab:compprobdis}:
E.g.~for bridges, the parameter of ${\mathcal R}(\lambda)$ is $\lambda=\sqrt{P''(1)/P(1)}$,
for walks, the parameter of ${\mathcal H}(\lambda)$ is $\lambda=\tau/2 \sqrt{P(1)/P''(1)}$,
where $\tau$ is the unique real positive value such that $P'(\tau)=0$.

What is more, if defined, the limit laws for the the number of $x$-axis crossings are the same as the ones of the local time
(with an appropriate new value for the distribution parameters).
\end{theo}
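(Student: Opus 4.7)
The plan is to treat each row of the table separately but by a uniform method: in each case, we first write down the bivariate generating function $L(z,q)$ tracking the length and the local time at $0$, and then we extract the limit law by singularity analysis of $L(z,q)$ in $z$ with $q$ a parameter in a neighbourhood of~$1$. Since the local time equals the number of returns to zero plus the number of $x$-axis crossings, and since for excursions and meanders (which live on $\N$) no crossing can occur, for these two families the claim immediately reduces to the known analysis of returns of~\cite{BaFl02,Wallner16b}. The novelty is therefore concentrated in the walk and bridge cases.

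For walks and bridges, a straightforward adaptation of Theorem~\ref{theo:closedform} (introducing a second variable marking the returns in the functional equation, and running the bilateral kernel method verbatim) yields a closed form for $L(z,q)$ as a rational function in the small roots $u_i(z)$ and the large roots $v_i(z)$ of the kernel $1-zP(u)=0$. The dominant singularity structure in $z$ is the one exhibited in~\cite{BaFl02}: a simple pole at $z=1/P(1)$ for walks, a square-root branch point at the same $\rho=1/P(1)$ for bridges (inherited from the coalescence of a small and a large root at $u=1$), and the smaller saddle-point singularity $1/P(\tau)$ for excursions and for meanders with negative drift. The key point is that the closed-form expression from Theorem~\ref{theo:closedform}, once reorganised via the Newton relations between the $u_i$'s (as alluded to after its proof), depends on $q$ only through a few symmetric functions of the roots, so its dependence on $q$ in a neighbourhood of $1$ is analytic.

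With this structural information in hand, each row of the table is obtained by matching $L(z,q)$ with one of the schemes classified in~\cite{Wallner16b}. The cases with a geometric or negative binomial limit correspond to schemes where a simple pole in $z$ is preserved under small perturbations in $q$ (a meromorphic quasi-power scheme); the parameter of the distribution is read off as the residue at $z=\rho(q)$ evaluated at $q=1$. The Rayleigh limit for bridges corresponds to the square-root scheme where $q$ deforms the branch structure; here the explicit parameter $\lambda=\sqrt{P''(1)/P(1)}$ comes from the standard second-order expansion of $P(u)$ around $u=1$, combined with Vieta for the small/large roots that coalesce at $\rho$. The walk with zero drift is treated by the half-normal scheme (the one of~\cite{Wallner16b} obtained by integrating the Rayleigh density against a constant), which applies exactly when the pole at $z=1/P(1)$ is simple and the branching $\sqrt{1-z/\rho}$ factor emerges only through the marking variable $q$; this is the case here because $P'(1)=0$ forces $\tau=1$, i.e.\ the saddle point coincides with the pole location, and the scaling constants match those announced in the statement.

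The main obstacle lies in the zero-drift case for unconstrained walks. There, the pole and the algebraic branch point collide, so one cannot split $L(z,q)$ as a product of a rational and a square-root piece; the limit law is obtained only after a delicate joint expansion of the coalescing small and large roots around $z=\rho$ and of their Puiseux dependence on $q$. We verify that this expansion matches precisely the half-normal scheme of~\cite{Wallner16b}, and then extract the parameter $\lambda=\tfrac{\tau}{2}\sqrt{P(1)/P''(1)}$ from the coefficients of the leading singular terms. Finally, the last sentence of the statement is a direct consequence: the returns-to-zero generating function enters $L(z,q)$ only as a smooth factor that modifies the parameter of the limit law but neither its family nor its support, so that removing it (i.e.\ marking only $x$-axis crossings) yields the same limit distribution with an adjusted parameter.
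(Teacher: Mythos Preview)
Your approach is essentially the one taken in the paper: reduce excursions and meanders to the returns-to-zero analysis of~\cite{BaFl02,Wallner16b}, use the closed form of Theorem~\ref{theo:closedform} for walks and bridges, and then match the local singular structure against the Drmota--Soria and half-normal schemes. Two points deserve correction, however.

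First, the discrete limit laws (geometric, negative binomial) do \emph{not} come from a ``meromorphic quasi-power scheme''. Quasi-power schemes yield Gaussian limits after rescaling; here the local time stays bounded in probability, and the result follows instead by showing that the probability generating function $[z^n]F(z,q)/[z^n]F(z,1)$ converges pointwise to the PGF of a geometric or negative binomial law. This is what happens in~\cite[Theorem~5]{BaFl02} for excursions and in the analogous meander computation.

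Second, for the Rayleigh and half-normal cases the paper is more explicit about the mechanism than your sketch: one shows that the relevant generating function $F(z,q)$ (namely $W_0(z,q)$ for bridges, $W(z,1,q)$ for walks) satisfies, locally near $(\rho,1)$,
\[
\frac{1}{F(z,q)} = g(z,q) + h(z,q)\sqrt{1-z/\rho}
\]
with $g,h$ analytic and $g(\rho,1)=0$. The distinction between Rayleigh and half-normal is then read off from the partial derivatives at $(\rho,1)$: bridges have $g_q(\rho,1)\neq 0$ and $h(\rho,1)\neq 0$ (Rayleigh), while zero-drift walks have $g_q=g_{qq}=h=0$ but $g_z\neq 0$, $h_q\neq 0$ (half-normal). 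Your narrative about ``the pole and the branch point colliding'' is correct intuition for why the zero-drift case is special, but the actual verification goes through these derivative conditions rather than a separate ``joint expansion''. Also, your description of the half-normal scheme as ``integrating the Rayleigh density against a constant'' is not accurate; the half-normal is the law of $|N(0,\sigma^2)|$, and the scheme in~\cite{Wallner16b} is characterised directly by the vanishing pattern above.
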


\begin{proof}[Proof (Sketch)]
Thanks to the Puiseux expansions following from Theorem~\ref{theo:closedform} and \cite{BaFl02}, 
it is possible to derive the limit laws:
we then get a shape on which we can apply the results of Drmota and Soria~\cite{DrSo97} on the Rayleigh and Gaussian distributions,
and of~\cite{Wallner16b} on the half-normal distribution. These distributions are depicted in Table~\ref{tab:compprobdis}. 
Details are omitted in this extended abstract.

In the cases of excursions and meanders the local time is equal to the number of returns to zero. 
The results for excursions were derived by Banderier and Flajolet in \cite[Theorem~5]{BaFl02}.
What is more, all ingredients for the case of meanders are also given in this paper, 
and the result follows the same lines. 
However, due to the drift dependent number of meanders (compare \cite[Theorem~4]{BaFl02}) 
three regimes need to be considered, leading to two different limit laws: negative binomial and geometric.

\def\arraystretch{1.9}
\begin{table}[hbt]
	\begin{center}
	\begin{tabular}{|c||c|c|c|c|}
		\hline
		&		\underline{\bf Geometric}
		&		\underline{\bf Negative binomial}
		&		\underline{\bf Half-normal}
		&		\underline{\bf Rayleigh}
		\\
		&		$\operatorname{Geom}(p)$
		&		$\operatorname{NB}(m,p)$
		&		$\Hc(\lambda)$
		&		$\Rc(\lambda)$
		\\		\hline 
		&		 {\includegraphics[width=0.20\textwidth]{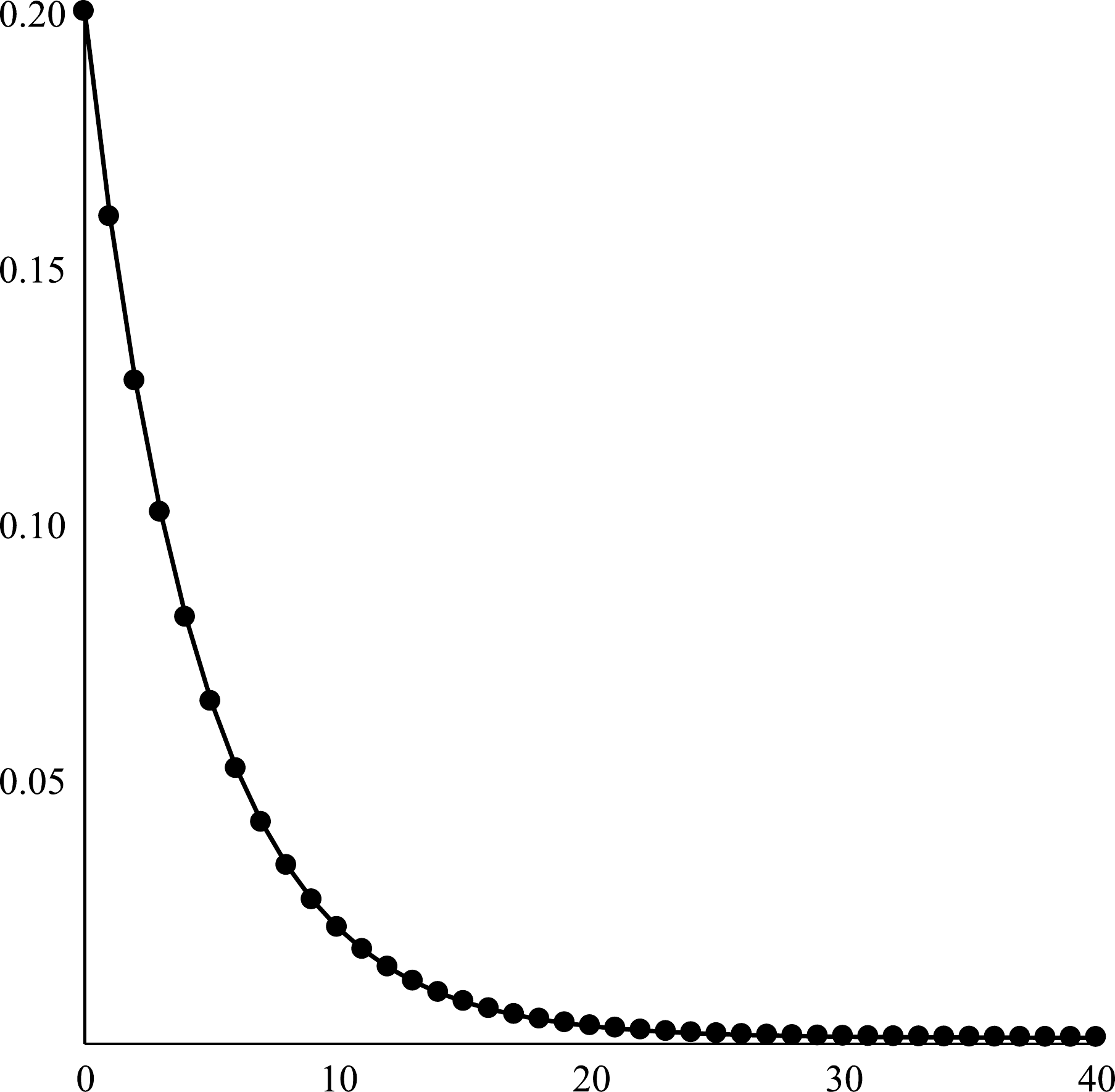}} 
		&		 {\includegraphics[width=0.20\textwidth]{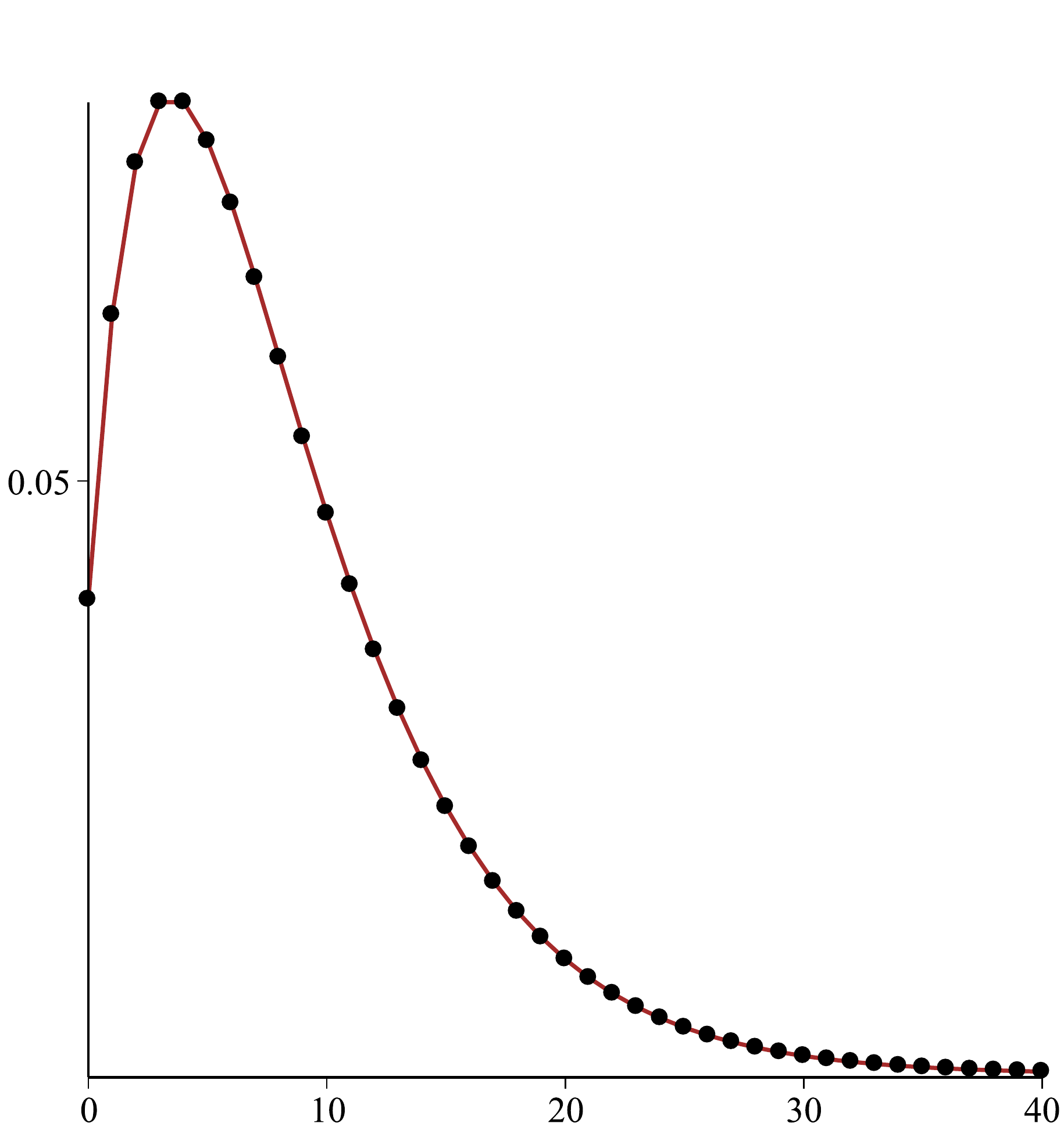}} 
		&		 {\includegraphics[width=0.20\textwidth]{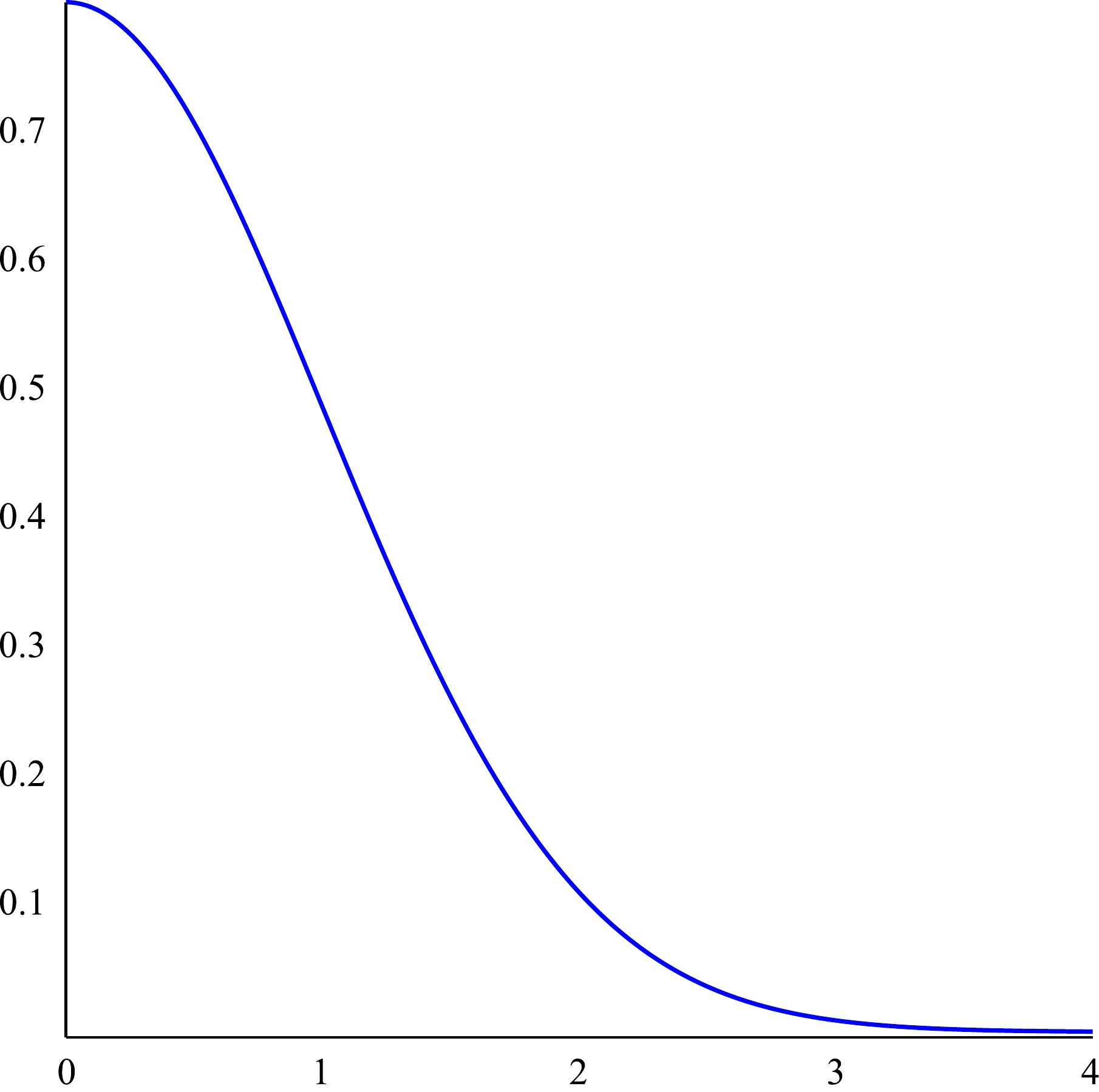}} 
		&		 {\includegraphics[width=0.20\textwidth]{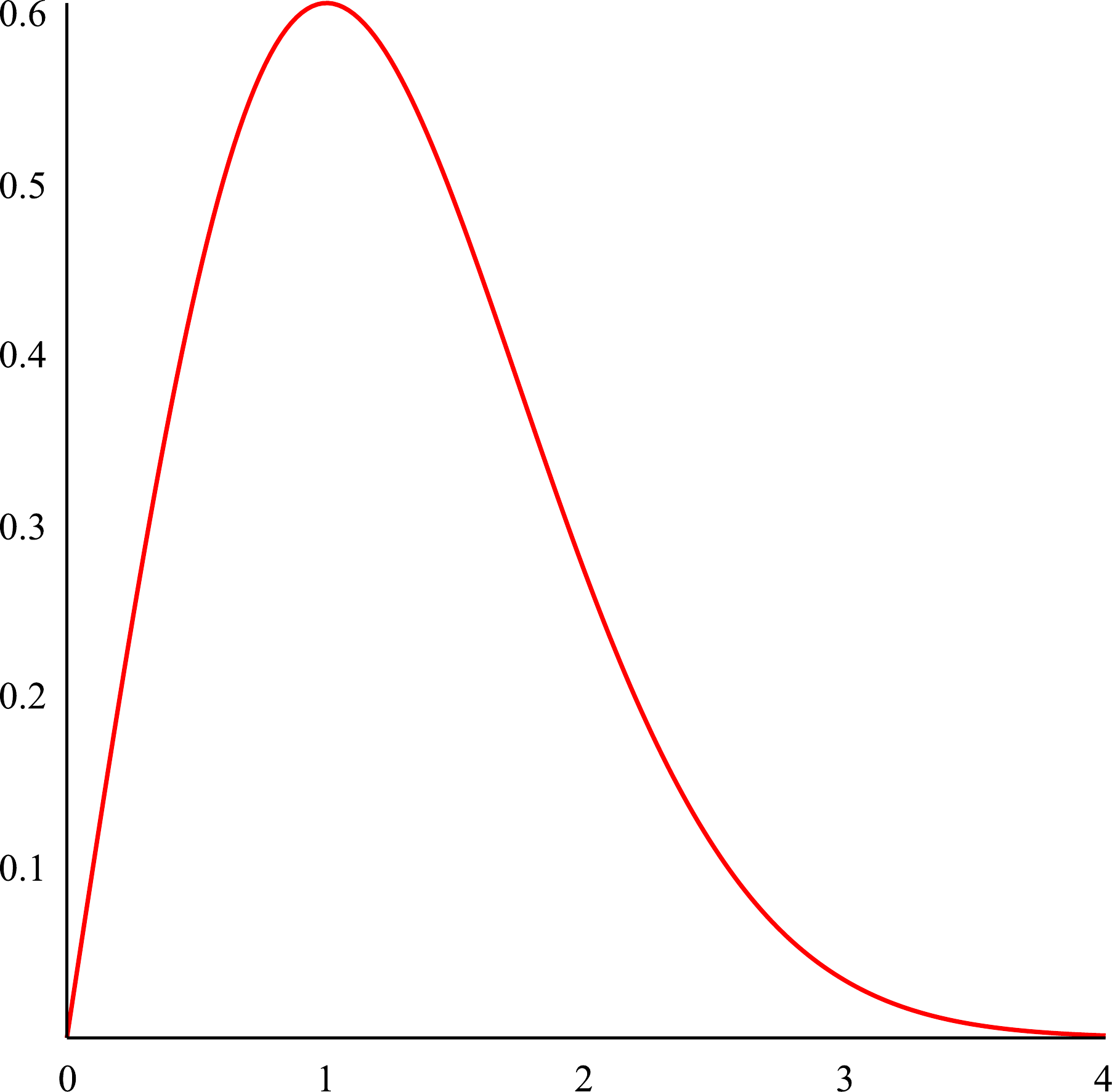}} 
		\\		\hline		Support
		&		$k \in \{0,1,\ldots\}$
		&		$k \in \{0,1,\ldots\}$
		&		$x \in \R_{\geq 0}$
		&		$x \in \R_{\geq 0}$
		\\		\hline		PDF
		&		$(1-p)^k p$
		&		$\binom{m+k-1}{k}(1-p)^k p^m$
		&		$\sqrt{\frac{2}{\pi \lambda^2}} \exp\left(-\frac{x^2}{2\lambda^2}\right)$
		&		$\frac{x}{\lambda^2} \exp\left(-\frac{x^2}{2\lambda^2}\right)$
		\\		\hline		Mean
		&		$\frac{1-p}{p}$
		&		$\frac{m(1-p)}{p}$
		&		$\lambda\sqrt{\frac{2}{\pi}}$
		&		$\lambda\sqrt{\frac{\pi}{2}}$
		\\		\hline		Variance
		&		$\frac{1-p}{p^2}$
		&		$\frac{m(1-p)}{p^2}$
		&		$\lambda^2 \left(1 - \frac{2}{\pi}\right)$
		&		$\lambda^2 \left(2 - \frac{\pi}{2} \right)$
		\\
		\hline
	\end{tabular}
	\end{center}
	\caption{A comparison of the geometric, negative binomial, half-normal, and Rayleigh distribution. They 
are the four distributions occurring for the local time of generalized Dyck paths. 
Feller~\cite[Chapter 3]{Feller68} did the analysis for the simplest case of walks with jumps $-1,+1$, 
our article shows how to tackle the more difficult situation involving any set of jumps of amplitude $\geq 1$.}
	\label{tab:compprobdis}
\end{table}

\newpage

For the cases of walks and bridges the local time is equal to the number of returns to zero and the number of $x$-axis crossings. 
The results for the number of returns to zero of walks were derived by Wallner in~\cite[Theorem~4.2]{Wallner16b} leading to a geometric or a half-normal distribution, depending whether the drift $P'(1)$ is non-zero or zero, respectively. 
As above, the result for the case of returns to zeros in bridges, follows also the same lines as the previous one and we omit the details in this extended abstract. In this case one uses the limit law of Drmota and Soria~\cite[Theorem~1]{DrSo97} to prove the existence of a Rayleigh distribution.

It remains to consider the laws of $x$-axis crossings in the cases of bridges and walks.
The proof of Theorem~\ref{theo:closedform} gives access to closed forms of $W_0(z,q)$ and $W(z,1,q)$, which are the generating functions of bridges and walks where crossings of the $x$-axis are marked by $q$.
In order to treat both at the same time we abbreviate them until the end of this proof by $F(z,q)$.
We want to apply either \cite[Theorem~1]{DrSo97} or \cite[Theorem~2.1]{Wallner16b}. 
Note that the technical conditions of these theorems are satisfied due to the closed forms in terms of small and large branches, 
and the fact, that due to the Weierstrass Preparation Theorem, 
the branches $u_1(z)$ and $v_1(z)$ (which are the real positive branches for $z>0$ in the vicinity of $0$, see Figure~\ref{Figu1v1})
 satisfy the necessary conditions (compare the derivations in \cite{Wallner16b}). 
In particular, as proven in~\cite{BaFl02}, they satisfy a square root behaviour with the following expansion at $z=\rho$:
\begin{align*}
	u_1(z) &= \tau - \sqrt{2\frac{P(\tau)}{P''(\tau)}} \sqrt{1-z/\rho} + \ldots,\\
	v_1(z) &= \tau + \sqrt{2\frac{P(\tau)}{P''(\tau)}} \sqrt{1-z/\rho} + \ldots.
\end{align*}
Due to the methods derived in~\cite{BaWa16}, we may assume without loss of generality, that our model is aperiodic. 
For such aperiodic models (like e.g.~Motzkin and basketball walks) there exists a unique singularity $\rho>0$ of $F(z,1)$. 

Now, the key fact is that our generating functions $F$ have locally the following behaviour
\begin{align*}
	\frac{1}{F(z,q)} &= g(z,q) + h(z,q)\sqrt{1-z/\rho},
\end{align*} 
for $|q-1|<\varepsilon$ and $|z-\rho|<\varepsilon$ with $\arg(z-\rho) \neq 0$ where $\varepsilon >0$ is some fixed real number, and $g(z,q)$ and $h(z,q)$ are analytic functions. 
Additionally, one has here that $g(\rho,1) = 0$.
Finally, in the case of bridges we get $g_q(\rho,1) <0$ and $h(\rho,1) \neq 0$ yielding a Rayleigh law. 
Whereas, in the case of walks with zero drift we get $g_q(\rho,1)=g_{qq}(\rho,1)=0$ and $h(\rho,1)=0$, but $g_z(\rho,1) \neq 0$ and $h_q(\rho,1) \neq 1$ giving a half-normal law. 
The respective parameters depend on the chosen step set. 
\end{proof}

\begin{figure}[ht!]
\begin{center}
\includegraphics[width=.5\textwidth]{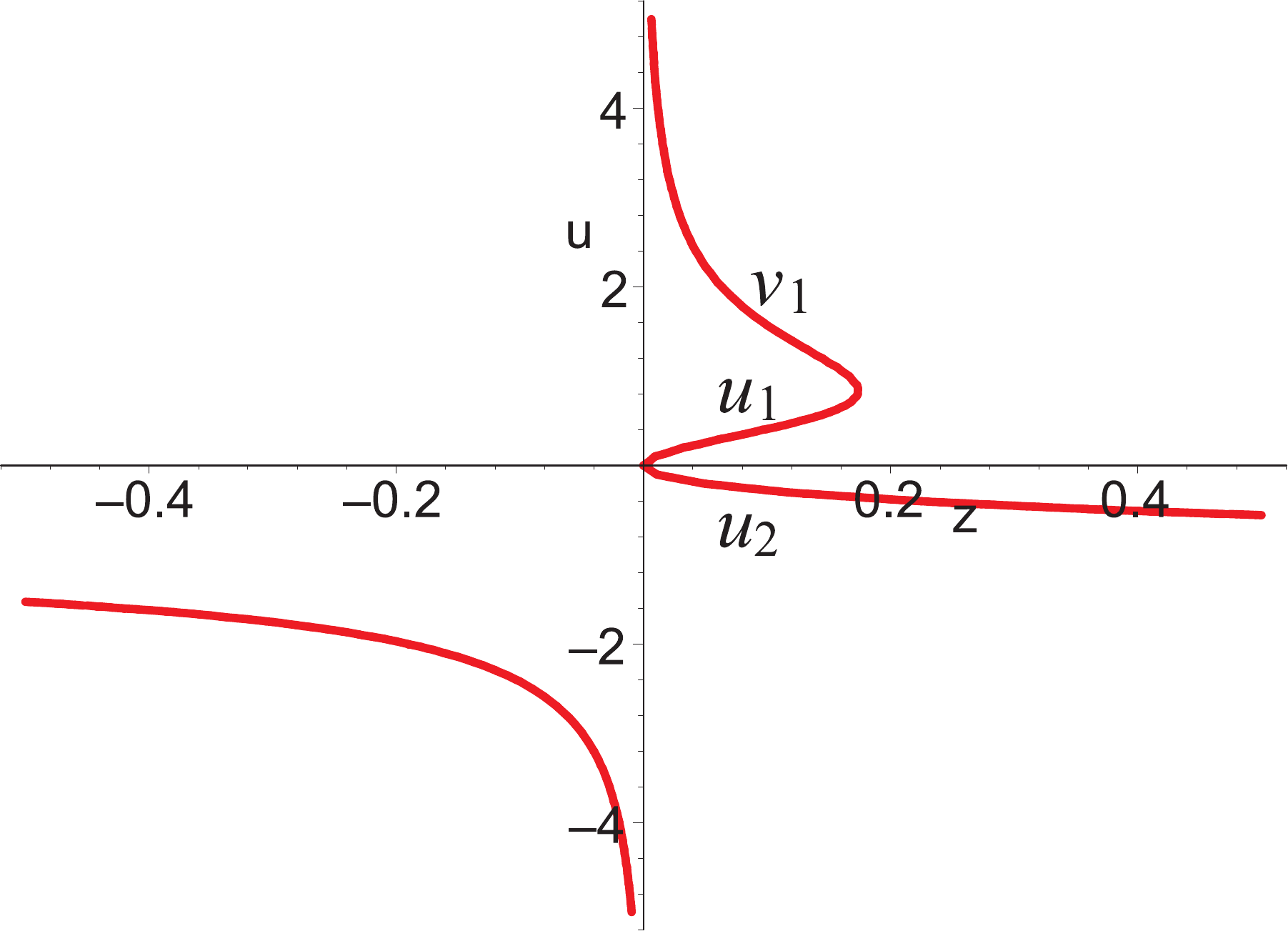}
\end{center}
\caption{The root $u_1$ and $v_1$ of the kernel $1-zP(u)$ are crossing at $z=\rho$ with a square root behaviour.
This implies the Puiseux expansions with a square root used in the proof above.}
\label{Figu1v1}
\end{figure}

\newpage

\def\w{84mm}

\begin{figure}[ht!]
\begin{center}
\includegraphics[width=\w]{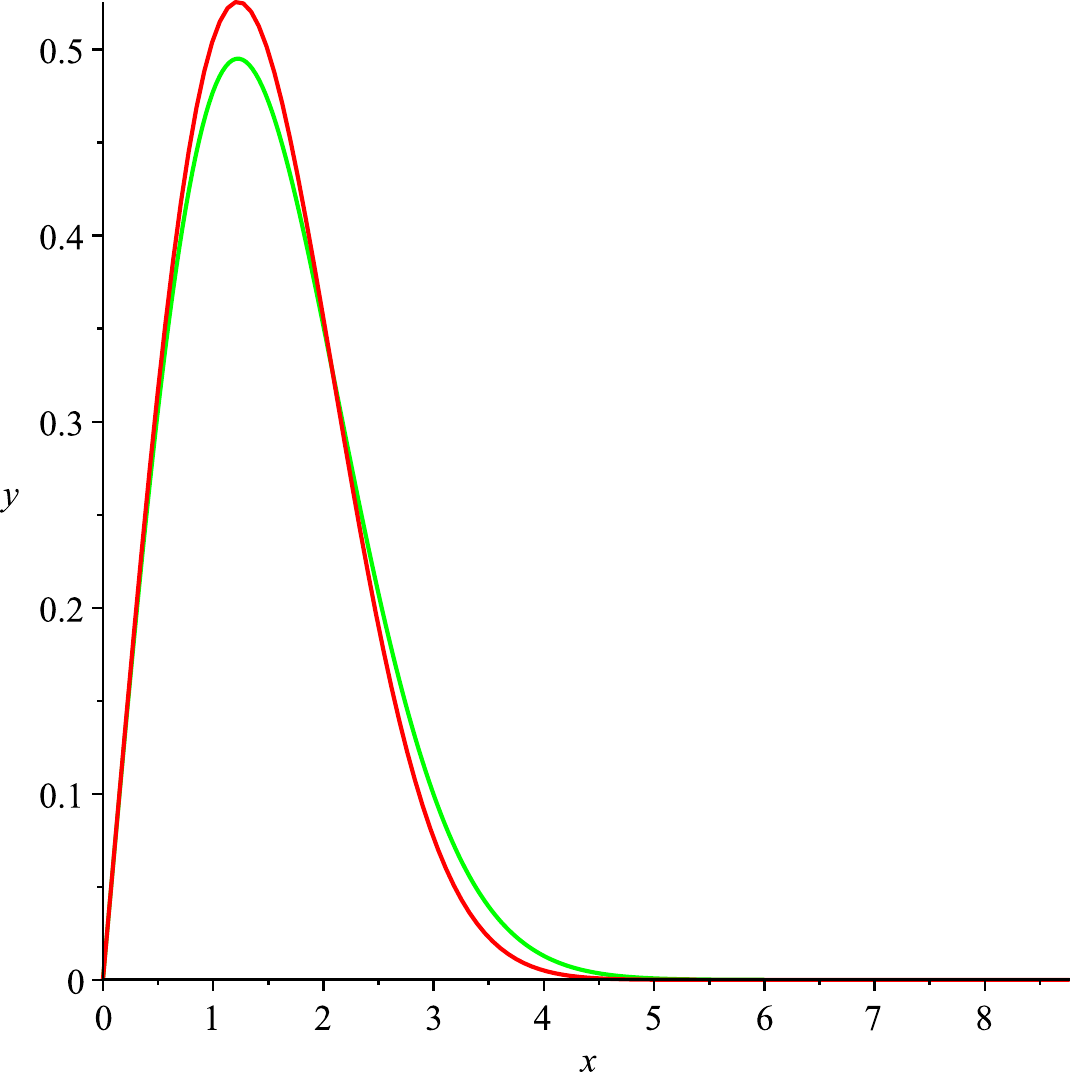}
\includegraphics[width=\w]{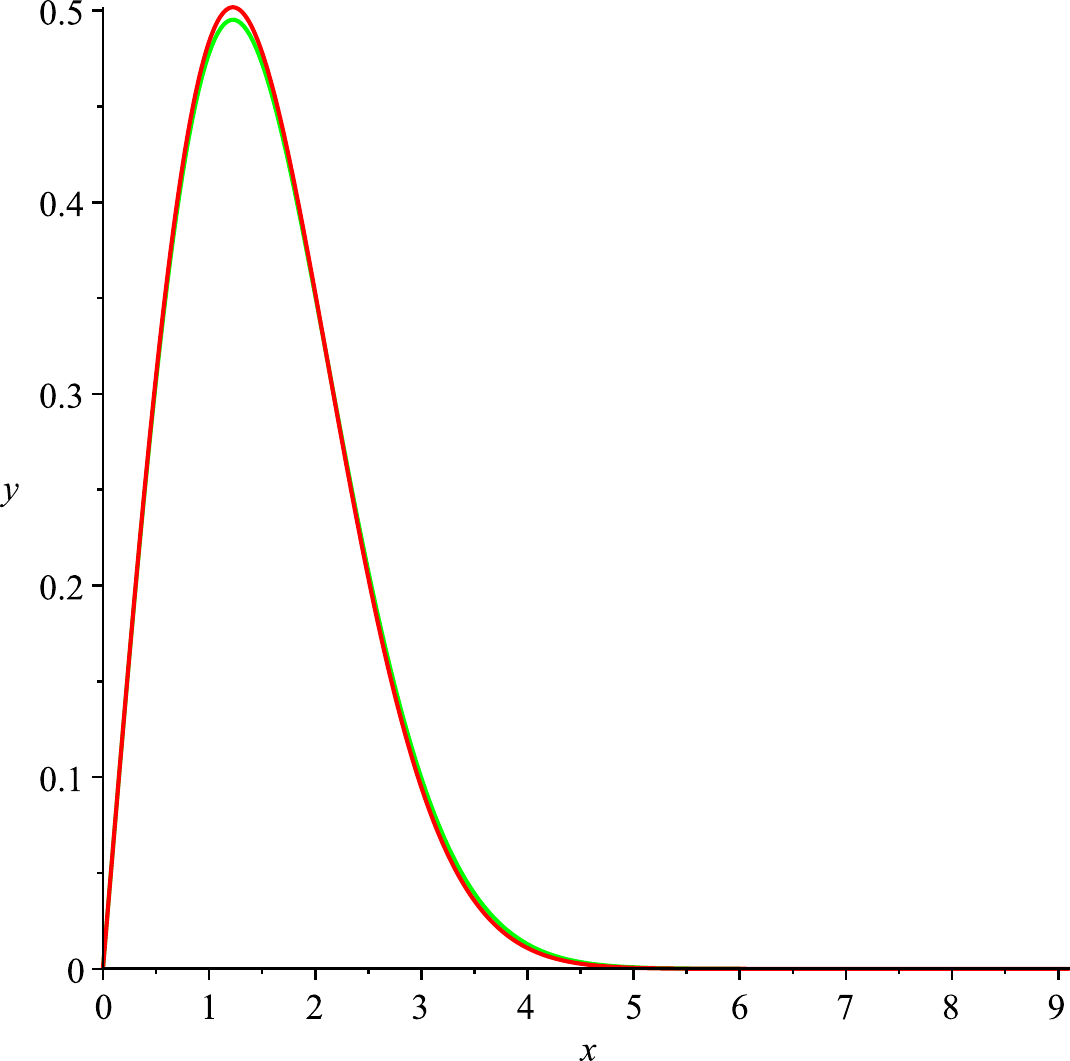}
\end{center}
	\caption{Distribution of $X_n$, the local time at $0$ for bridges of length $n$. 
(Left: $n=200$. Right: $n=4000$. Step set ${\mathcal S}=[-2,-1,0,1,2]$).
One quickly observes a nearly perfect match of the distribution (in red) of $X_n$
with the Rayleigh limit law (in green).
This fast convergence is well explained by the error terms we get
via analytic combinatorics.}
\end{figure}

\begin{figure}[hb!]
\begin{center}
\includegraphics[width=\w]{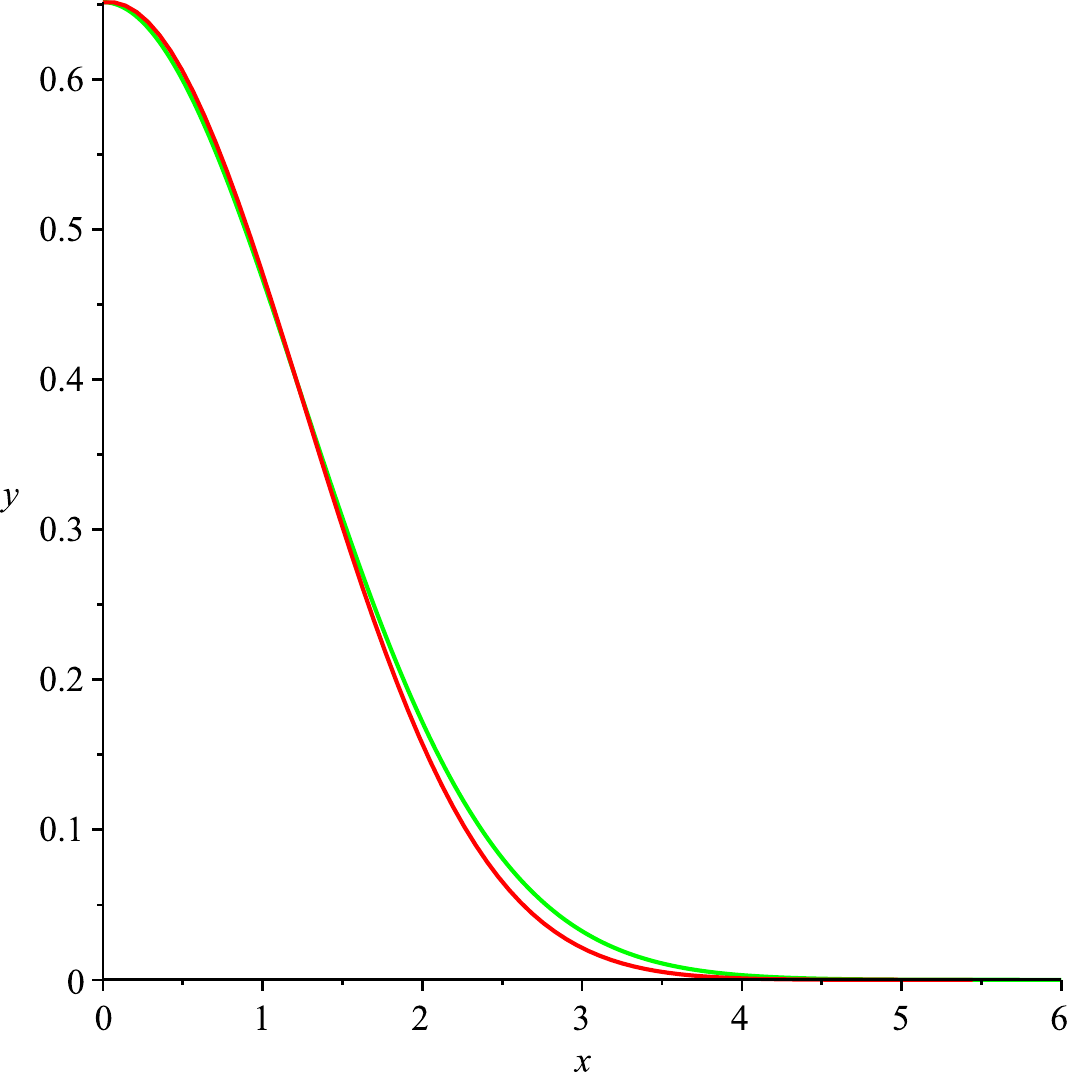}
\includegraphics[width=\w]{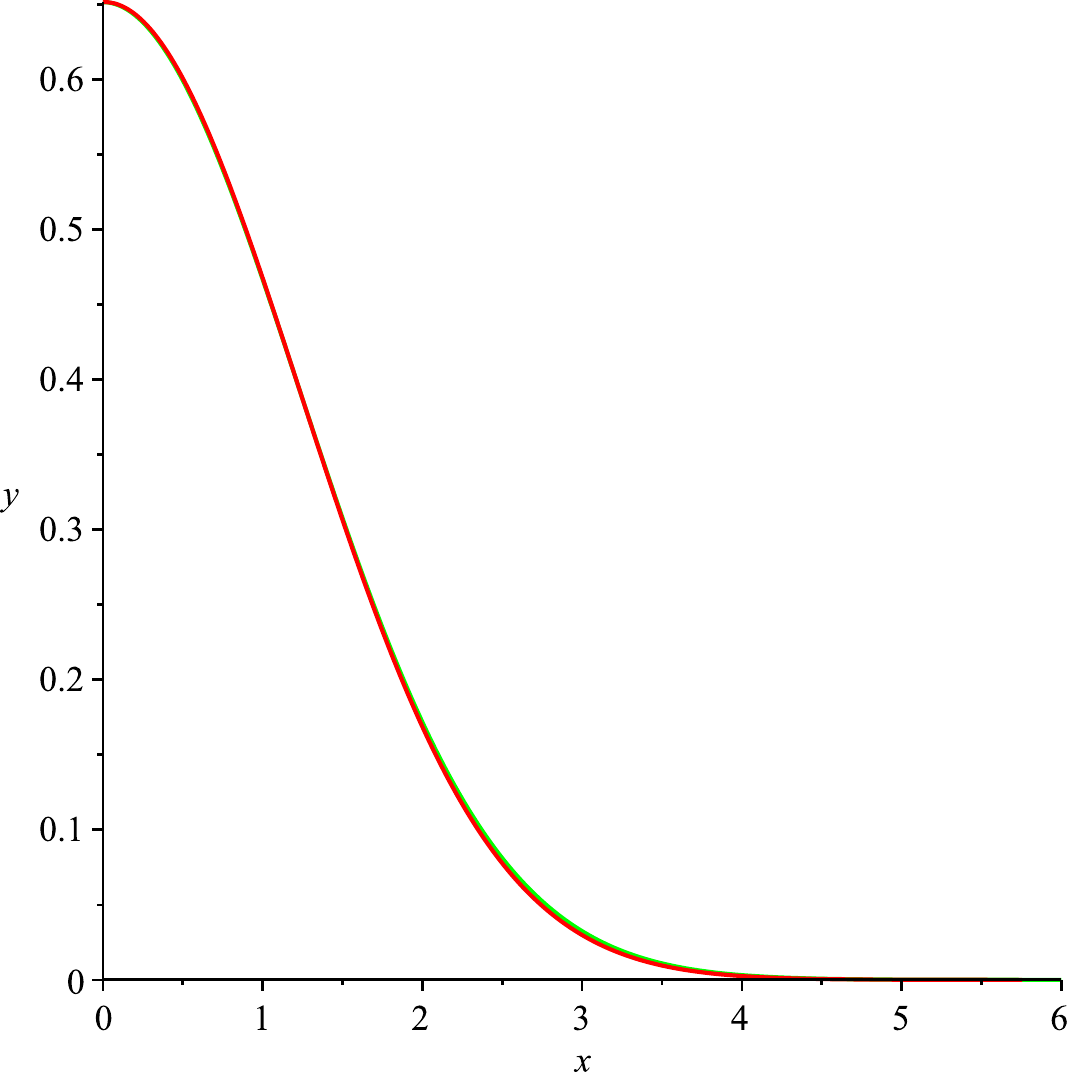}
\end{center}
	\caption{Distribution of $X_n$, the local time at $0$ for walks of length $n$. 
(Left: $n=200$. Right: $n=4000$. Step set ${\mathcal S}=[-2,-1,0,1,2]$).
One quickly observes a nearly perfect match of the distribution (in red) of $X_n$
with the half-normal limit law (in green). Here again,
this fast convergence is well explained by the error terms we get
via analytic combinatorics.}
\end{figure}
 
\pagebreak

\section{Conclusion}
\label{sec:conclusion}
In this article we showed how to derive the generating function for the local time at $0$ of directed lattice paths. 
It completes the work of Banderier and Flajolet~\cite{BaFl02}, 
who just handled the case of returns to zero of excursions.
It is also extending the work of Feller~\cite{Feller68} and later Wallner~\cite{Wallner16b},
who did the Dyck and Motzkin cases (for meanders/walks).

In order to solve the generating functions in the more general case, 
we used a mixture of the Omega operator and the kernel method. This leads to expressions
from which we showed how to derive the limit law of the local time, for several models of constrained lattice paths.
In the full version of this article, we give more closed form formulas, and we show that other parameters 
(like the number of ``changes of signs'', or the number of jumps from positive to negative altitude)
can also be analysed using our approach, and that they satisfy similar limit laws. 

These parameters are very natural for discrete random walks,
it is interesting that it is not possible to analyse them via a Brownian motion approach:
indeed a Brownian motion can be seen as the limit after a rescaling of the amplitude  
by~$\sqrt{n}$ and the length by~$n$ (see~\cite{Marchal03}).
This rescaling implies that (discrete) jumps crossing the abscissa would be of amplitude $0$, and are therefore completely erased.
It is therefore nice that analytic combinatorics can get the asymptotics of this ``discrete local time'', 
and the corresponding universal limit laws, while there are ``invisible'' via a Brownian motion approach.
In a forthcoming article, we tackle further analysis of the height of discrete lattice paths;
this allows to get the connection with the Brownian local time.

\smallskip
\textbf{Acknowledgments:}
\label{sec:ack}
This work was started via collaboration funded by the SFB project F50 ``Algorithmic and Enumerative Combinatorics'' 
and the Franco-Austrian PHC ``Amadeus'', and ended during 
the postdoctoral position of Michael Wallner at the University of Paris Nord, in September-December 2017, thanks to a MathStic funding. 
Michael Wallner is currently supported by the Erwin Schr{\"o}dinger Fellowship of the Austrian Science Fund (FWF):~J~4162-N35.

\bibliographystyle{alpha}

\newcommand{\etalchar}[1]{$^{#1}$}

\end{document}